\documentclass[12pt,leqno]{amsart}
\usepackage{amsmath, amsthm, amssymb, xspace, mathrsfs}
\usepackage[tmargin=1.4in,bmargin=1.2in,rmargin=1.4in,lmargin=1.4in]{geometry}
\usepackage[breaklinks=true]{hyperref}
\usepackage{amsmath,amscd}
\usepackage{tikz-cd}

\theoremstyle{plain}
\newtheorem{theorem}{Theorem}[section]
\newtheorem{lemma}{Lemma}[section]

\theoremstyle{definition}
\newtheorem{defin}{Definition}[section]

\newtheorem*{problem}{The corona problem}

\newcommand{\comment}[1]{}

\begin{document}
\title[On the corona problem problem]{On the corona problem for strongly pseudoconvex domains}
\author{Akaki Tikaradze}
\email{Akaki.Tikaradze@utoledo.edu}
\address{University of Toledo, Department of Mathematics \& Statistics, 
Toledo, OH 43606, USA}

\maketitle
\begin{abstract}
In this note we solve that the corona problem for strongly pseudoconvex domains
under a certain assumption on the level sets of the corona data. This result settles a question of S. Krantz \cite{K}.

\end{abstract}
\vspace*{0.2in}
Recall that given a bounded domain $\Omega\subset \mathbb{C}^n$, the corona problem
for $\Omega$ asks if the following question has an affirmative answer. Throughout $H^{\infty}(\Omega)$ denotes the algebra
of bounded holomorphic functions on $\Omega$.

\begin{problem}

Let $f_1,\cdots, f_m\in H^{\infty}(\Omega).$ Assume that there exists $\epsilon>0$ such that $\sum_j |f_j(z)|>\epsilon$ for all $z\in \Omega.$
Does there exist $g_j\in H^{\infty}(\Omega), 1\leq j\leq m$ so that $$1=\sum_{j=1}^m f_jg_j.$$
\end{problem}

The corona problem for the unit disc was settled affirmatively by Carleson in 1962 in his celebrated paper  \cite{C}.
Later on the corona problem was solved positively for many planar domains (see for example \cite{GJ}). Currently,
there is no known domain in $\mathbb{C}$ for which the corona problem has a negative answer.

 The case for high dimensional domains is  a lot more complicated: there are examples of smooth pseudoconvex domains
 for which the corona problem has a negative answer (see \cite{FS}, \cite{S}.) Meanwhile, the corona problem is still widely open for such
 basic domains as balls and polydiscs. More generally, the answer to the corona problem is unknown for any strongly pseuodoconvex
 domain in $\mathbb{C}^n, n\geq 2.$
 
 To motivate our result, note that the assumption on functions $f_1,\cdots, f_m$ in the corona problem states
 that there exists $\epsilon>0$ such that $\bigcap_i |f_i|^{-1}(-\epsilon, \epsilon)=\emptyset.$ We solve the corona problem (for a class of domains)
 assuming that the intersection of closures of $|f_i|^{-1}(-\epsilon, \epsilon)$ in $\mathbb{C}^n$
 is empty.
 
 To state our result, it will be convenient to use the following definition.
 
 \begin{defin}
 
 Let $\Omega\subset\mathbb{C}^n$ be a domain. We say that $\Omega$ is a $\bar\partial L^{\infty}$-domain if for any smooth, bounded, 
 $\bar\partial$-closed form $\omega$ on $\Omega$, there exists a smooth bounded form $\omega'$ so that $\omega=\bar\partial(\omega').$
 
 \end{defin}
 Example of $\bar\partial L^{\infty}$-domains include products of $C^2$-smooth strongly pseudoconvex domains \cite{SH},
 as well as smooth bounded pseodoconvex finite type domains \cite{R}.

\begin{theorem}\label{main}
Let $\Omega\subset \mathbb{C}^n$ be a $\bar\partial L^{\infty}$-domain. Let $f_1,\cdots, f_m\in H^{\infty}(\Omega).$
Suppose that there exists $\epsilon>0$ so that 
$$\bigcap_{j=1}^m\overline{|f_i|^{-1}(-\epsilon, \epsilon)}=\emptyset.$$
Then there exists $g_i\in H^{\infty}(\Omega)$ so that $$1=\sum_{j=1}^mf_jg_j.$$

\end{theorem}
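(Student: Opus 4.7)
The plan is to combine a partition-of-unity construction of a smooth \emph{bounded} partial inverse of $(f_1,\ldots,f_m)$ with a H\"ormander-type Koszul descent, invoking the $\bar\partial L^\infty$-property at each rung of the tower. The first step, where the strengthened hypothesis enters, produces $\phi_1,\ldots,\phi_m \in C^\infty(\Omega)$ with $\sum_j f_j\phi_j \equiv 1$ and with both $\phi_j$ and $\bar\partial\phi_j$ uniformly bounded on $\Omega$. Set $K_j := \overline{|f_j|^{-1}(-\epsilon,\epsilon)}$ (closure in $\mathbb{C}^n$) and $W_j := \mathbb{C}^n \setminus K_j$. The hypothesis says that $\{W_j\}$ is an open cover of $\mathbb{C}^n$, hence of the compact set $\overline\Omega$. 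Choose a smooth partition of unity $\{\chi_j\}$ subordinate to $\{W_j\}$ on a neighborhood of $\overline\Omega$; since $|f_j|\geq \epsilon$ on $W_j\cap\Omega$, the function $\phi_j := \chi_j/f_j$ (extended by zero) is smooth and bounded by $\|\chi_j\|_\infty/\epsilon$ on $\Omega$, and $\bar\partial\phi_j = (\bar\partial\chi_j)/f_j$ is a smooth bounded $(0,1)$-form. The closure being taken in $\mathbb{C}^n$ is crucial here.

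Given these $\phi_j$, I form the antisymmetric Koszul tower
$$
u^{(p)}_{j_0,\ldots,j_p} := \sum_{i=0}^{p}(-1)^i\,\phi_{j_i}\,\bar\partial\phi_{j_0}\wedge\cdots\wedge\widehat{\bar\partial\phi_{j_i}}\wedge\cdots\wedge\bar\partial\phi_{j_p},
$$
smooth bounded $(0,p)$-forms antisymmetric in $j_0,\ldots,j_p$. Writing $K(w)_{j_1,\ldots,j_{p-1}} := \sum_k f_k\, w_{k,j_1,\ldots,j_{p-1}}$ for the Koszul contraction, a direct computation using $\bar\partial f_k = 0$ and $\sum_k f_k \phi_k = 1$ gives
$$
\bar\partial u^{(p)}_{j_0,\ldots,j_p} = (p+1)\,\bar\partial\phi_{j_0}\wedge\cdots\wedge\bar\partial\phi_{j_p}, \qquad K(u^{(p)})_{j_1,\ldots,j_p} = \bar\partial\phi_{j_1}\wedge\cdots\wedge\bar\partial\phi_{j_p}.
$$
In particular $u^{(n)}$ is $\bar\partial$-closed, since a $(0,n+1)$-form vanishes on an $n$-dimensional complex manifold. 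Setting $\tilde u^{(n)} := u^{(n)}$, I descend: given a bounded antisymmetric $\bar\partial$-closed $(0,p)$-form $\tilde u^{(p)}$ with $K(\tilde u^{(p)})_{j_1,\ldots,j_p} = \bar\partial\phi_{j_1}\wedge\cdots\wedge\bar\partial\phi_{j_p}$, apply the $\bar\partial L^\infty$-property to obtain a smooth bounded antisymmetric $(0,p-1)$-form $v^{(p-1)}$ with $\bar\partial v^{(p-1)} = \tilde u^{(p)}$ (antisymmetrizing indices afterwards is harmless), then define $\tilde u^{(p-1)} := u^{(p-1)} - p\, K(v^{(p-1)})$. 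Using $K\circ K=0$ and the two identities above, $\tilde u^{(p-1)}$ is again smooth, bounded, $\bar\partial$-closed, and satisfies $K(\tilde u^{(p-1)})_{j_1,\ldots,j_{p-1}} = \bar\partial\phi_{j_1}\wedge\cdots\wedge\bar\partial\phi_{j_{p-1}}$, so the induction continues.

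After $n$ descents, $\tilde u^{(0)}$ is a bounded holomorphic $m$-tuple with $\sum_j f_j \tilde u^{(0)}_j = 1$, and one takes $g_j := \tilde u^{(0)}_j$. The only substantive step — and what I expect to be the real obstacle — is the initial partition-of-unity construction: under only the classical hypothesis $\sum_j|f_j|\geq \epsilon$ on $\Omega$, the natural H\"ormander inverse $\phi_j = \bar f_j / \sum_k |f_k|^2$ has $\bar\partial\phi_j$ that is not uniformly bounded near boundary points at which $\sum_k|f_k|$ approaches $\epsilon$, so the $\bar\partial L^\infty$-property could not be applied as a black box. The closure hypothesis is precisely what turns the input into bounded smooth data, after which the Koszul descent is purely formal and terminates because $(0,\bullet)$-forms on $\Omega$ vanish above degree $n$.
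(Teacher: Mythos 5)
Your proposal is correct and follows essentially the same route as the paper: a partition of unity subordinate to the cover by the complements of the sets $\overline{|f_j|^{-1}(-\epsilon,\epsilon)}$ yields a smooth bounded solution $\phi_j$ with bounded $\bar\partial\phi_j$, and the Koszul descent with the $\bar\partial L^{\infty}$-property (which the paper isolates as Lemma \ref{key}) upgrades it to a bounded holomorphic solution. Your explicit antisymmetric tower $u^{(p)}$ is just an unpacked version of that lemma's induction, and all the identities you use check out.
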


For $m=2,$  this result was obtained by Krantz \cite{K}. In \cite{P} a similar conclusion is obtained
under a much more restrictive assumption that there exist $j\neq k$ such that 
$\overline{|f_j|^{-1}(-\epsilon, \epsilon)}\cap \overline{|f_k|^{-1}(-\epsilon, \epsilon)}\cap \partial\Omega=\emptyset.$

For the proof we need to recall the following crucial result which should be well-known to experts.

\begin{lemma}\label{key}
Let $\Omega\subset \mathbb{C}^n$ be a $\bar\partial L^{\infty}$-domain. Let $f_1,\cdots, f_m\in H^{\infty}(\Omega).$
Assume there exist $g_i\in C^{\infty}(\Omega)$ so that  $g_j, \bar{\partial}g_j$ are bounded and $$1=\sum_{j=1}^m f_jg_j.$$
Then there exists $h_j\in H^{\infty}(\Omega)$ such that $$1=\sum_{j=1}^m f_jh_j.$$

\end{lemma}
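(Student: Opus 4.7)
The plan is to execute the classical Koszul--Dolbeault trick (H\"ormander's method for the $\bar\partial$-approach to corona problems), with the $\bar\partial L^\infty$-hypothesis supplying the bounded primitives at each stage of a descending cascade. I arrange the data in the bigraded space $C^{p,q}$ of smooth $(0,q)$-forms on $\Omega$ with values in $\Lambda^p \mathbb{C}^m$ (antisymmetric $p$-tensors in the indices $1,\dots,m$), equipped with the Dolbeault operator $\bar\partial$ raising $q$ and the Koszul contraction $\delta$ with $f=(f_1,\dots,f_m)$ lowering $p$. These commute because the $f_j$ are holomorphic. The datum $g=(g_1,\dots,g_m)\in C^{1,0}$ satisfies $\delta g = \sum_j f_j g_j = 1$, and the goal is to produce a bounded $\bar\partial$-closed $h\in C^{1,0}$ with $\delta h = 1$.

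The first step is to construct bounded smooth Koszul forms $u^{(p)}\in C^{p,p-1}$ for $1\leq p\leq n+1$ by full antisymmetrization,
\[
u^{(p)}_{j_1\cdots j_p} := \sum_{\sigma\in S_p}\operatorname{sgn}(\sigma)\,g_{j_{\sigma(1)}}\,\bar\partial g_{j_{\sigma(2)}}\wedge\cdots\wedge\bar\partial g_{j_{\sigma(p)}}.
\]
Each $u^{(p)}$ is bounded because $g_j$ and $\bar\partial g_j$ are. A direct computation using $\sum_j f_j g_j = 1$ together with its consequence $\sum_j f_j\bar\partial g_j = \bar\partial(1) = 0$ yields the key Koszul identity $\delta u^{(p)} = \bar\partial u^{(p-1)}$ for $p\geq 2$ (along with $\delta u^{(1)} = 1$).

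Now I would cascade downward. Because $\Omega\subset\mathbb{C}^n$ has no nonzero $(0,n+1)$-forms, $u^{(n+1)}$ is $\bar\partial$-closed. Setting $w^{(n+2)} := 0$, I inductively construct bounded smooth $w^{(p)}\in C^{p,p-2}$ for $p = n+1, n,\dots,2$ by solving
\[
\bar\partial w^{(p)} = u^{(p)} - \delta w^{(p+1)}
\]
via the $\bar\partial L^\infty$-hypothesis applied componentwise; this is legal because the right-hand side is bounded and $\bar\partial$-closed:
\[
\bar\partial\bigl(u^{(p)} - \delta w^{(p+1)}\bigr) = \bar\partial u^{(p)} - \delta\bar\partial w^{(p+1)} = \delta u^{(p+1)} - \delta\bigl(u^{(p+1)} - \delta w^{(p+2)}\bigr) = 0,
\]
using the Koszul identity and the inductive hypothesis.

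Finally, set $h := g - \delta w^{(2)}\in C^{1,0}$. Then $\delta h = \delta g - \delta^2 w^{(2)} = 1$, and
\[
\bar\partial h = \bar\partial g - \delta\bar\partial w^{(2)} = \bar\partial u^{(1)} - \delta\bigl(u^{(2)} - \delta w^{(3)}\bigr) = \bar\partial u^{(1)} - \delta u^{(2)} = 0,
\]
so $h$ is holomorphic and its components $h_j\in H^\infty(\Omega)$ satisfy $\sum_j f_j h_j = 1$. The main obstacle is in some sense illusory: the genuine content is concentrated entirely in the $\bar\partial L^\infty$-hypothesis, which is precisely what makes the descending cascade go through; the remaining work --- verifying $\delta u^{(p)} = \bar\partial u^{(p-1)}$ and tracking signs through the induction --- is routine Koszul bookkeeping.
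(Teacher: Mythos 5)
Your proof is correct and is essentially the paper's argument: both run the Koszul--$\bar\partial$ double complex for $(f_1,\dots,f_m)$, use the contraction homotopy built from the $g_j$ (your explicit antisymmetrized forms $u^{(p)}$ are exactly what iterating the paper's homotopy $\eta(x)=\sum_j e_j\wedge g_j x$ produces), and invoke the $\bar\partial L^{\infty}$-hypothesis at each stage of a descending correction, terminating because there are no nonzero $(0,n+1)$-forms. The only difference is presentational --- you unroll the cascade with explicit formulas where the paper phrases it as a downward induction on the form degree $l$ --- so the two proofs coincide in substance.
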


Its proof uses the usual technique of the Koszul complex of the sequence $(f_1,\cdots, f_m)$ and is identical to that of [\cite{ST}, Corollary 3] where functions $g_j$ are taken to be $\bar{f_j}/\sum_j|f_j|^2.$
We give the proof in an effort to keep this note self contained.
\begin{proof}[Proof of Lemma \ref{key}]
At first, we define the Koszul complex on the sequence $(f_1,\cdots, f_n)$ with coefficients in bounded $C^{\infty}$-differential forms
on $\Omega.$ Let 
$$V=\oplus_{i=1}^m\mathbb{C}e_i, \quad K_{j, l}=\Lambda^jV\otimes_{\mathbb{C}}C^{\infty}_{0, l}(\Omega),$$
where $C^{\infty}_{0, l}(\Omega)$ denotes the space of $(C^{\infty})$ smooth $(0, l)$-forms on $\Omega.$
Then we have the Koszul differential $b: K_{j, l}\to K_{j-1, l}$ defined by the formula
$$b((e_{i_1}\wedge\cdots\wedge e_{i_t})\otimes \omega)=\sum_{p=1}^t(-1)^{p+1}e_{i_1}\wedge\cdots\hat{e}_{i_p}\cdots\wedge e_{i_t}\otimes(f_{i_p}\omega).$$
One can easily check that $b^2=0.$
We also have the $\bar{\partial}$ operator: $\bar{\partial}: K_{j, l}\to K_{j, l+1}$ defined as follows:
$$\bar{\partial}((e_{i_1}\wedge\cdots\wedge e_{i_t})\otimes\omega)=(e_{i_1}\wedge\cdots\wedge e_{i_t})\otimes\bar{\partial}(\omega).$$
Clearly $b$ and  $\bar{\partial}$ commute. So, $Ker(\bar{\partial})\cap K_{0,l}$ is the space of $\bar{\partial}$-closed $(0, l)$ forms.
Remark that our assumption of $(f_1,\cdots, f_m)$ implies that for any bounded 
$x\in K_{j, l}$ such that $b(x)=0,$ there exists a bounded $x'\in K_{j+1, l}$ with $b(x')=x.$ Indeed, it is straightforward
to check that the following choice of $x'=\eta(x)$ works: 
$$\eta(x)=\sum_{j=1}^m e_j\wedge g_jx, \quad b(\eta(x))=x$$

To prove the lemma we show a more general statement: If $x\in K_{j, l}$ is  bounded and 
$$b(x)=\bar{\partial}(x)=0,$$
then there exists a bounded $x'\in K_{j+1, l}$ such that $$b(x')=x, \bar\partial(x')=0.$$ We proceed by downwards induction
on $l.$ The base case of $l=n+1$ obviously holds. Since $x=b(\eta(x))$, then $$0=\bar{\partial}(b(\eta(x)))=b(\bar\partial(\eta(x))).$$
So we may apply the inductive hypothesis to $\bar\partial(\eta(x))$: there exists bounded $\bar\partial$-closed $y\in K_{j+1, l+1}$
such that $b(y)=\bar\partial(\eta(x)).$ Now, since $\Omega$ is a $\bar\partial L^{\infty}$-domain, there exists bounded $z\in K_{j+1, l}$ so that $\bar\partial(z)=y.$
Hence, $$\bar\partial(b(z))=\bar\partial(\eta(x)).$$ 
Finally, put $x_1=\eta(x)-b(z).$ Then $b(x_1)=x$ and $x_1$ is a bounded $\bar\partial$-closed
form, as desired.
\end{proof}

\begin{proof}[Proof of Theorem \ref{main}]

Put $U_j=\mathbb{C}^n\setminus \overline{|f_j|^{-1}(-\epsilon, \epsilon)}, 1\leq j\leq m.$ So $U_j, 1\leq j\leq m$ is a an open cover of $\mathbb{C}^n.$
Let $\rho_j, 1\leq j\leq m$ be a smooth partition of unity corresponding to this cover. So $\sum_j \rho_j=1$ and $\text{supp}(\rho_j)\subset U_j.$
In particular $||\bar\partial(\rho_j)||_{\infty}<\infty$ for all $j.$
Then 
$$||\bar{\partial}(\frac{\rho_j}{f_j})||_{\infty}\leq \epsilon^{-1}||\bar{\partial}(\rho)||_{\infty}, \quad ||\frac{\rho_j}{f_j}||_{\infty}\leq \epsilon^{-1}||\rho_j||_{\infty}, 
\quad 1\leq j\leq m.$$
Put $$g_j=\frac{\rho_j}{f_j}, 1\leq j\leq m.$$
 Then $g_j, \bar\partial(g_j)$ are bounded for all $j$ and
$$1=\sum_j f_jg_j.$$
 Thus we are done by Lemma \ref{key}.

\end{proof}


\begin{thebibliography}{qowq}


\bibitem[C]{C}
L. Carleson, {\em  Interpolation by bounded analytic functions and the corona
problem}, Ann. of Math. 76(1962), 547--559.



\bibitem[FS]{FS}
J. E. Fornaess, N. Sibony, {\em  Smooth pseudoconvex domains in $C^2$
for which the corona theorem and $L^p$
estimates for $\bar{\partial}$ fail}, Complex analysis
and geometry, 209--222, Univ. Ser. Math., Plenum, New York, 1993.




\bibitem[GJ]{GJ} J. B. Garnett,  P. W. Jones, {\em The corona theorem for Denjoy domains},
Acta Math. 155(1985), 27--40.



\bibitem[K]{K}
S.~Krantz, {\em The Corona Problem with two pieces of data}, Proc. Amer. Math. Soc. 138 (2010), no. 10, 3651--3655.


\bibitem[P]{P}
O.~Preda, {\em The corona problem with restrictions on the relative position of sublevel sets}, 
Arch. Math. (Basel) 105 (2015), no. 6, 563--569. 








\bibitem[R]{R}
M.~Range, {\em Integral kernels and {H}\"older estimates for $\bar\partial$ on pseudoconvex domains of finite type in $\mathbb{C}^2$},
Math. Ann. 288 (1990), no. 1, 63--74.

\bibitem[S]{S} N. Sibony, Probl`eme de la couronne pour des domaines pseudoconvexes `a
bord lisse, Ann. of Math. 126(1987), 675–682.


\bibitem[SH]{SH} 
 A. G. Sergeev and G. M. Henkin, {\em Uniform estimates of the solutions of the $\bar\partial$-equation in pseudoconvex polyhedra},
Mat. Sb. (N.S.) 112(154) (1980), no. 4(8), 522–567, translation in Math. USSR-Sb. 40 (1981), no. 4, 469–507.


\bibitem[ST]{ST}
S~ \c{S}ahuto\u{g}lu, A.~Tikaradze, {\em On a theorem of Bishop and commutants of Toeplitz operators in $\mathbb{C}^n$},
Rend. Circ. Mat. Palermo (2) 68 (2019), no. 2, 237--246.



\end{thebibliography}
\end{document}